\theoremstyle{plain}
\newtheorem{theorem} {Theorem}[section]
\newtheorem{lemma}[theorem] {Lemma}
\newtheorem{corollary}[theorem] {Corollary}
\theoremstyle{definition}
\newtheorem{definition}[theorem] {Definition}
\theoremstyle{remark}
\newtheorem{remark}[theorem] {Remark}
\numberwithin{equation}{section}
\def\DEF{\stackrel{\rm def}{=}}
\def\CC{\mathbb{C}}
\def\NN{\mathbb{N}}
\def\PP{\mathbb{P}}
\def\RR{\mathbb{R}}
\def\TT{\mathbb{T}}
\def\oCC{\operatorfont{C\/}}
\def\oBB{\operatorfont{B\/}}
\def\omm{\operatorfont{m\/}}
\newcommand{\mcAA}{{\mathcal A}}
\def\({\left(}                  \def\){\DOTSX\right)}
\def\lbr{\left[}                \def\rbr{\DOTSX\right]}
\def\lv{\left|}                 \def\rv{\DOTSX\right|}
\def\lb{\left\{}                \def\rb{\DOTSX\right\}}
\def\lno{\left\|}               \def\rno{\DOTSX\right\|}
\def\<{\left\langle}            \def\>{\right\rangle}
\begin{document}

\parindent=0pt\parskip=8pt
\linespread{1.3}

\title[M.~Riesz-Schur-type inequalities]
{M.~Riesz-Schur-type inequalities \\ for entire functions of exponential type}

\author[T.~Erd\'elyi]{Tam\'as Erd\'elyi}
\address{Department of Mathematics, Texas A\&M University, College Station, TX 77843, USA}
\email{terdelyi@math.tamu.edu }

\author[M.~I.~Ganzburg]{Michael I. Ganzburg}
\address{Department of Mathematics, Hampton University, Hampton, VA 23668, USA}
\email{michael.ganzburg@hamptonu.edu}
 
\author[Paul Nevai]{Paul Nevai}
\address{KAU, Jeddah, Saudi Arabia, \texttt{and} Upper Arlington (Columbus), Ohio, USA}
\email{paul@nevai.us}
\thanks{The research of Paul Nevai was supported by KAU grant No. 20-130/1433 HiCi.}

\keywords{M.~Riesz-Schur-type inequalities, Duffin-Schaeffer inequality,
entire functions of exponential type}

\subjclass[2010]{41A17, 26D07}

\begin{abstract} 

We prove a general M.~Riesz-Schur-type inequality for entire functions of
exponential type. If $f$ and $Q$ are two functions of exponential type
$\sigma > 0$ and $\tau \ge 0$, respectively, and if $Q$ is real-valued and the
real and distinct zeros of $Q$ are bounded away from each other, then
\begin{equation*}
	|f(x)| \le (\sigma + \tau) (A_{\sigma + \tau}(Q))^{-1/2} \lno Q f \rno_{\oCC(\RR)},
	\qquad x\in\RR \, ,
\end{equation*}
where $A_s(Q) \DEF \inf_{x\in\RR} \(\lbr Q'(x) \rbr^2 + s^2 \lbr Q(x) \rbr^2\)$. 
We apply this inequality to the weights 
$Q(x) \DEF \sin \(\tau x\)$ and $Q(x) \DEF x$ and describe all extremal
functions in the corresponding inequalities. 

\end{abstract}

%\dedicatory{Dedicated to Andrei A. Gonchar and Herbert Stahl}

\date{submission: April 15, 2014}

%; last revision: MMM DD, 2014; accepted: MM DD, 2014; available online: MM DD, 2014}

\maketitle

\section{Introduction}\label{S1}

\setcounter{equation}{0}

For $n\in\NN$, let $\PP_n$ and $\TT_n$ denote the set of algebraic and
trigonometric polynomials of degree at most $n$ with complex coefficients,
respectively. Next, given $\gamma \ge 0$, let $\oBB_\gamma$ be the collection
of entire functions of order $1$ and type at most $\gamma$, that is, entire
functions of exponential type at most $\gamma$. In other words,
$f\in\oBB_\gamma$ if and only if for all $\varepsilon > 0$ and for all
$z\in\CC$ we have $\limsup_{|z|\to\infty} |f(z)| \exp\(-\(\gamma +
\varepsilon\)|z|\) < \infty$. Finally, let $\oCC(\Omega)$ be the space of all
continuous complex-valued functions $f$ on $\Omega \subset \RR$ for which
$\lno f \rno_{\oCC(\Omega)} \DEF \sup_{t\in\Omega} |f(t)| <
\infty$.\footnote{With apologies for the repulsive $\lno f(t)
\rno_{\oCC(\Omega)}$ notation that we occasionally use in this paper.}

The M.~Riesz-Schur Inequality\footnote{Hitherto, this used to be called Schur
Inequality; see, e.g., \cite{nevai2014} and \cite{nevai201x} for reasons why
one should add Marcel Riesz's name to it.}
\begin{equation} \label{eq_schur}
	\lno P \rno_{\oCC([-1,1])} \le 
	(n+1) \, \lno \sqrt{1-t^2} \, P(t) \rno_{\oCC([-1,1])} ,
	\qquad P\in\PP_{n} \, ,
\end{equation}
is well known and it plays an essential role in approximation theory. In
particular, the simplest proof of Markov's inequality is based on applying
\eqref{eq_schur} to the Bernstein Inequality. We have no doubts that every
person reading this paper has seen this done on countless occasions so let us
just name one such reference here, namely, M.~Riesz's \cite[\S3, Satz~IV,
p.~359]{mriesz1914_2} from 1914 which is the granddaddy of all of them.

The following M.~Riesz-Schur-type
\begin{equation} \label{E1.1}
	\lno T \rno_{\oCC([-\pi,\pi))} \le
	(n+1) \lno \sin t \, T(t) \rno_{\oCC([-\pi,\pi))},
	\qquad T\in\TT_n \, ,
\end{equation}
and Schur-type
\begin{equation} \label{E1.2}
	\lno P \rno_{\oCC([-1,1])} \le
	(n+1) \lno t \, P(t) \rno _{\oCC([-1,1])}, 
	\qquad P\in\PP_n \, ,
\end{equation}
inequalities are less known. In fact, polling top experts, the third of us
found not a single person who was aware of \eqref{E1.1} despite looking so
deceivingly similar to \eqref{eq_schur}. Although for even trigonometric
polynomials \eqref{E1.1} is indeed equivalent to \eqref{eq_schur}, we don't
know if any simple trick could be used to extend it from even to all
trigonometric polynomials. We found \eqref{E1.1} hiding as
\cite[Lemma~15.1.3, p.~567]{rahman2002} with no reference to any original
source, and, therefore, we must assume that \cite{{rahman2002}} is the one
and only place where the proof is presented and there is a good probability
that the proof belongs to Q.~I.~Rahman.\footnote{Qazi Ibadur Rahman died on
July 21, 2013, and, thus, the source of \eqref{E1.1} may never be found.}
Inequality \eqref{E1.2} was proved in a 1919 paper by Schur, see \cite[(29),
p.~285]{schur1919}. As a matter of fact, Schur proved a little more. Namely,
for odd $n$ the factor $n+1$ in \eqref{E1.2} can be replaced by $n$.

In this note we discuss various versions of inequalities \eqref{E1.1} and
\eqref{E1.2} for entire functions of exponential type. They are special cases
of the following general M.~Riesz-Schur-type inequality.

\begin{theorem} \label{T1.1}

Let $\tau \ge 0$ and $\sigma > 0$. Let  $Q\in \oBB_\tau$ be a function that is
real-valued on $\RR$ and let $Z_Q \DEF \{t_n\}$ denote the real and distinct
zeros of $Q$. Assume that if $\operatorname{card} Z_Q > 1$ then $\inf_{k\ne
\ell} \lv t_k-t_\ell \rv > 0$. Let
\begin{equation} \label{E1.31}
	A_s(Q) \DEF \inf_{t\in\RR} \(\lbr Q'(t) \rbr^2 + s^2 \lbr Q(t) \rbr^2\) ,
	\qquad s \ge 0 \, .
\end{equation}
Then the following two statements hold true.

{\rm(a)} If $f\in \oBB_\sigma$, then we have
\begin{equation} \label{E1.5}
	|f(x)| \le (\sigma + \tau) \(A_{\sigma + \tau}(Q)\)^{-1/2} \lno Q f \rno_{\oCC(\RR)},
	\qquad x \in \RR \, .
\end{equation}

{\rm(b)} If equality holds in \eqref{E1.5} for a point $x = x_0 \in \RR$ and
for a function $f \in \oBB_\sigma$ that is real-valued on $\RR$ with $f
\not\equiv 0$, then either $ \lv (Qf)(x_0) \rv = \lno Qf \rno_{\oCC(\RR)}$ or
there exist two real constants $S$ and $C$ such that $|S| + |C| > 0$ and
\begin{equation} \label{E1.51}
	Q(x) \, f(x) \equiv
	S \sin \((\sigma + \tau) \, x\) + C \cos \((\sigma + \tau) \, x\) ,
	\qquad x \in \RR \, .
\end{equation}

\end{theorem}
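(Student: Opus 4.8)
The plan is to reduce the problem to the classical Bernstein/Duffin–Schaeffer circle of ideas applied to a cleverly chosen auxiliary entire function of exponential type $\sigma+\tau$. Fix $x_0\in\RR$ and, after multiplying $f$ by a unimodular constant, assume $f(x_0)\ge 0$. The key observation is that the product $g\DEF Qf$ lies in $\oBB_{\sigma+\tau}$, is real-valued on $\RR$, and is bounded there by $M\DEF\lno Qf\rno_{\oCC(\RR)}$. I want a pointwise inequality that recovers $|f(x_0)|$ from $g$ and $g'$ at $x_0$. Differentiating $g=Qf$ gives $g'(x_0)=Q'(x_0)f(x_0)+Q(x_0)f'(x_0)$, so
\begin{equation*}
	[g(x_0)]^2 + \frac{[g'(x_0) - Q(x_0)f'(x_0)]^2}{(\sigma+\tau)^2}
	= \(\,[Q(x_0)]^2 + \frac{[Q'(x_0)]^2}{(\sigma+\tau)^2}\,\)[f(x_0)]^2 ,
\end{equation*}
hmm — that still carries the unwanted $f'(x_0)$ term. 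The right move is instead to consider, for the fixed point $x_0$, the trigonometric-type combination: since $g\in\oBB_{\sigma+\tau}$ and $\lno g\rno_{\oCC(\RR)}=M$, the Bernstein inequality gives $|g'(x_0)|\le (\sigma+\tau)\sqrt{M^2-[g(x_0)]^2}$, or equivalently
\begin{equation*}
	[g(x_0)]^2 + \frac{[g'(x_0)]^2}{(\sigma+\tau)^2} \le M^2 .
\end{equation*}
This is the sharp form of Bernstein's inequality (the ``Bernstein–Szegő'' inequality), and it is exactly the quantity I can compare with $A_{\sigma+\tau}(Q)$.

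So the heart of the argument is: from $g'(x_0)=Q'(x_0)f(x_0)+Q(x_0)f'(x_0)$ and $g(x_0)=Q(x_0)f(x_0)$, I want to bound $|f(x_0)|$ using only the Bernstein–Szegő inequality above together with the definition of $A_{\sigma+\tau}(Q)$. The cleanest path is to apply the sharp Bernstein inequality not at $x_0$ directly but to the shifted function $h(t)\DEF g(t+x_0)$ and note that $h$ and $h'$ at $0$ are what appear; but more to the point, I expect the right auxiliary function is $u(t)\DEF f(x_0)Q(t) - (\text{something})$. Concretely: the function $t\mapsto g(t)$ restricted near $x_0$, compared with the extremal $M\cos((\sigma+\tau)(t-t^*))$, forces $[g(x_0)]^2+(\sigma+\tau)^{-2}[g'(x_0)]^2\le M^2$. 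Now I use that $f'(x_0)$ is a free real parameter: by choosing the representation optimally, one shows
\begin{equation*}
	[Q'(x_0)]^2 + (\sigma+\tau)^2[Q(x_0)]^2 \;\le\; \frac{(\sigma+\tau)^2 M^2}{[f(x_0)]^2},
\end{equation*}
because the left side is (up to the factor $[f(x_0)]^2$) the minimum over the choice of $f'(x_0)$ of $[g'(x_0)]^2+(\sigma+\tau)^2[g(x_0)]^2$ — wait, that is not quite a minimum since $f'(x_0)$ is determined by $f$. The correct statement: for the actual $f$ we have $[g(x_0)]^2+(\sigma+\tau)^{-2}[g'(x_0)]^2\le M^2$, and the left-hand side equals $[f(x_0)]^2\big([Q(x_0)]^2 + (\sigma+\tau)^{-2}[Q'(x_0)+Q(x_0)f'(x_0)/f(x_0)]^2\big)$; dropping the cross term is not legitimate, so instead I observe that by the Cauchy–Schwarz / completing-the-square inequality, for any reals $a,b,c$ and $s>0$,
\begin{equation*}
	(a+b)^2 + s^2 c^2 \;\ge\; \frac{s^2(ac)^2}{a^2+\cdots}
\end{equation*}
— this is getting delicate, and \textbf{this algebraic reduction is exactly the step I expect to be the main obstacle}: extracting $[Q'(x_0)]^2+(\sigma+\tau)^2[Q(x_0)]^2$, hence $A_{\sigma+\tau}(Q)$, from the single scalar Bernstein–Szegő inequality for $g=Qf$. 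The resolution should be to apply Bernstein not to $g$ alone but to the whole family $g_c(t)\DEF Q(t)f(t)\cos c - (\text{conjugate-type function})\sin c$, or more simply to use that $A_s(Q)$ is the infimum of $[Q']^2+s^2[Q]^2$ and that the hypothesis on the separation of zeros of $Q$ guarantees this infimum is positive and — crucially — is attained in a limiting sense along any sequence, which is what lets one localize.

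Once the pointwise inequality \eqref{E1.5} is established, part (b) is a rigidity/equality analysis. Equality at $x_0$ forces equality in the Bernstein–Szegő inequality for $g=Qf$ at $x_0$, and the classical characterization of equality in the sharp Bernstein inequality (see the Duffin–Schaeffer theory, which the keywords announce) says that either $|g(x_0)|=\lno g\rno_{\oCC(\RR)}$ — giving the first alternative $|(Qf)(x_0)|=\lno Qf\rno_{\oCC(\RR)}$ — or $g$ must be the extremal function $S\sin((\sigma+\tau)t)+C\cos((\sigma+\tau)t)$ with $|S|+|C|>0$, which is precisely \eqref{E1.51}. I would also need to chase through that equality forces the infimum defining $A_{\sigma+\tau}(Q)$ to be attained at $x_0$, but that falls out of the same extremal analysis. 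The main structural ingredients are therefore: (i) $Qf\in\oBB_{\sigma+\tau}$ and the sharp Bernstein inequality with its equality cases; (ii) the algebraic manipulation connecting $(g(x_0),g'(x_0))$ to $f(x_0)$ and $A_{\sigma+\tau}(Q)$; and (iii) the separation hypothesis on $Z_Q$, whose role is to ensure $A_{\sigma+\tau}(Q)>0$ so that the right-hand side of \eqref{E1.5} is finite and the division is legitimate.
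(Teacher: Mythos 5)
There is a genuine gap, and it is precisely the step you flag yourself as the main obstacle: extracting $[Q'(x_0)]^2+(\sigma+\tau)^2[Q(x_0)]^2$ from the Duffin--Schaeffer (Bernstein--Szeg\H{o}) inequality for $g=Qf$ \emph{at the arbitrary point} $x_0$. This cannot work at $x_0$ itself, because $g'(x_0)=Q'(x_0)f(x_0)+Q(x_0)f'(x_0)$ carries the uncontrolled term $Q(x_0)f'(x_0)$, which may have either sign and any size; no completing-the-square or family-of-functions trick is offered that removes it, and none is needed in the paper. The paper's resolution is a change of point, not a sharper identity: apply the Duffin--Schaeffer bound \eqref{E3.1} at a point $x^\ast$ where $|f|$ attains its maximum, since there $f'(x^\ast)=0$, hence $(Qf)'(x^\ast)=Q'(x^\ast)f(x^\ast)$ and the left-hand side of \eqref{E3.1} becomes $\bigl([Q'(x^\ast)]^2+(\sigma+\tau)^2[Q(x^\ast)]^2\bigr)f(x^\ast)^2\ge A_{\sigma+\tau}(Q)\,\lno f\rno_{\oCC(\RR)}^2$; the bound at every $x$, in particular at $x_0$, then follows from $|f(x_0)|\le\lno f\rno_{\oCC(\RR)}$. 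Making this legitimate requires knowing $\lno f\rno_{\oCC(\RR)}<\infty$ and handling non-attainment of the supremum. The finiteness comes from the Schur-type bound $\lno f\rno_{\oCC(\RR)}\le C_3\lno Qf\rno_{\oCC(\RR)}$ of Lemma~\ref{L2.5}, proved via Katsnelson's Remez-type inequality (Lemma~\ref{L2.3}) on the $(d/2,d/4)$-dense set $\{x:|Q(x)|\ge\alpha^\ast\}$ of Lemma~\ref{L2.4}; this, and not positivity of $A_{\sigma+\tau}(Q)$, is the true role of the separation hypothesis on $Z_Q$ --- your closing claim misassigns it, since separation alone does not force $A_{\sigma+\tau}(Q)>0$ (take $Q(x)=\sin x/x$, whose $A_s$ vanishes), and when $A_{\sigma+\tau}(Q)=0$ the inequality \eqref{E1.5} is vacuous anyway. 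Non-attainment is handled by multiplying $f$ by $\sin(\delta(x-x_\varepsilon))/(\delta(x-x_\varepsilon))$, which lies in $\oBB_{\sigma+\delta}$ and decays at infinity, and then letting $\delta\to0+$ using the right-continuity of $s\mapsto A_s(Q)$ (Lemma~\ref{L2.1}); complex-valued $f$ is reduced to the real case via $G=\cos\eta\,f_1+\sin\eta\,f_2$. None of these ingredients appears in the proposal.

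For part (b) your outline is closer in spirit to the paper, but as written it presupposes that equality in \eqref{E1.5} at $x_0$ yields equality in the pointwise Duffin--Schaeffer inequality for $Qf$ at $x_0$; that implication again rests on the missing observation: equality forces $|f(x_0)|=\lno f\rno_{\oCC(\RR)}$, hence $f'(x_0)=0$, hence $(Qf)'(x_0)=Q'(x_0)f(x_0)$, and only then does the chain of inequalities saturate so that the Duffin--Schaeffer equality characterization (in the case $|(Qf)(x_0)|<\lno Qf\rno_{\oCC(\RR)}$) gives \eqref{E1.51}. So both parts hinge on the same single idea --- work where $f'$ vanishes --- which is absent from your argument.
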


\begin{remark} \label{R1.2}

Our proof of Theorem~\ref{T1.1} is based on the Duffin-Schaeffer inequality
for entire functions of exponential type, see \cite[Theorem~II,
p.~555]{DS1937}. The proof of Theorem~\ref{T1.1}(a) is similar to the
proof of \cite[Lemma~15.1.3, p.~567]{rahman2002} in Rahman-Schmei{\ss}er's
book.

\end{remark}

\begin{remark} \label{R1.3}

If equality holds in \eqref{E1.5} for a point $x = x_0 \in \RR$ and for a
function $f \in \oBB_\sigma$ that is real-valued on $\RR$, then $f'(x_0) = 0$
so that if $f \not\equiv 0$ and $Q'(x_0) \ne 0$, then $\lv (Qf)(x_0) \rv <
\lno Qf \rno$.

\end{remark}

%\medskip\centerline{{\color{blue}\fbox{\fbox{\bf $\Longrightarrow$ Michael \& Tam\'as: START READING HERE $\Longleftarrow$}}}}%hereiam

\begin{remark} \label{R1.4}

Theorem~\ref{T1.1}(b) shows that equality holds in \eqref{E1.5} only in
rather exceptional cases since $f$ in \eqref{E1.51} must be
an entire function (of order $1$ and type at most $\sigma$) so that the zeros
of $Q$, counting multiplicities, must be zeros of the right-hand side as well.

\end{remark}

In the following two corollaries we apply Theorem~\ref{T1.1} with the two
M.~Riesz-Schur's weights that appear in \eqref{E1.1} and \eqref{E1.2}.

\begin{corollary} \label{C1.2}

Let $\tau > 0$ and $\sigma > 0$. If $f\in \oBB_\sigma$, then
\begin{equation} \label{E1.6}
	|f(x)| \le \( \frac{\sigma} {\tau} + 1 \) \lno \sin \(\tau \, t\) f(t) \rno_{\oCC(\RR)},
	\qquad x\in\RR \, .
\end{equation}
Moreover, equality holds in \eqref{E1.6} for a point $x = x_0 \in \RR$ such
that $\cos \(\tau \, x_0\) \ne 0$, and for a function $f\in
\oBB_\sigma$ that is real-valued on $\RR$ with $f \not\equiv 0$ if and only
if $\sigma/\tau \in \NN$ and 
$f(x) \equiv S \sin \(\(\sigma + \tau\) x\)/\sin \(\tau x\)$ for some real 
$S \ne 0$.

\end{corollary}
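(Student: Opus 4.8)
The plan is to deduce the inequality \eqref{E1.6} directly from Theorem~\ref{T1.1}(a) by taking $Q(t) \DEF \sin(\tau t)$, and then to extract the equality description from Theorem~\ref{T1.1}(b) together with Remark~\ref{R1.3}. First I would check the hypotheses: $Q(t) = \sin(\tau t)$ is real-valued on $\RR$ and belongs to $\oBB_\tau$, its real zeros are $Z_Q = \{k\pi/\tau : k \in \ZZ\}$, which are distinct and satisfy $\inf_{k\ne\ell}|t_k - t_\ell| = \pi/\tau > 0$. Next I would compute $A_{\sigma+\tau}(Q)$. Since $Q'(t) = \tau\cos(\tau t)$, we have
\begin{equation*}
	\lbr Q'(t)\rbr^2 + (\sigma+\tau)^2\lbr Q(t)\rbr^2
	= \tau^2\cos^2(\tau t) + (\sigma+\tau)^2\sin^2(\tau t),
\end{equation*}
and since $\sigma + \tau > \tau > 0$ this is a convex combination-type expression minimized where $\sin(\tau t) = 0$, giving $A_{\sigma+\tau}(Q) = \tau^2$. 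Substituting $(\sigma+\tau)(A_{\sigma+\tau}(Q))^{-1/2} = (\sigma+\tau)/\tau = \sigma/\tau + 1$ into \eqref{E1.5} yields \eqref{E1.6}.

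For the equality statement I would argue as follows. Suppose equality holds in \eqref{E1.6} at a point $x_0$ with $\cos(\tau x_0)\ne 0$, for a real-valued $f \in \oBB_\sigma$, $f\not\equiv 0$. Then $Q'(x_0) = \tau\cos(\tau x_0) \ne 0$, so by Remark~\ref{R1.3} we have $|(Qf)(x_0)| < \lno Qf\rno_{\oCC(\RR)}$; hence the first alternative in Theorem~\ref{T1.1}(b) is excluded and \eqref{E1.51} must hold, i.e.
\begin{equation*}
	\sin(\tau x)\, f(x) \equiv S\sin\((\sigma+\tau)x\) + C\cos\((\sigma+\tau)x\)
\end{equation*}
with $|S|+|C| > 0$. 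Now I invoke Remark~\ref{R1.4}: the right-hand side must vanish at every zero of $\sin(\tau x)$, in particular at $x = 0$, which forces $C = 0$; and then $S\ne 0$. Evaluating near $x = k\pi/\tau$ for general $k\in\ZZ$, the right-hand side $S\sin\((\sigma+\tau)k\pi/\tau\)$ must be zero, so $\sin\((\sigma/\tau + 1)k\pi\) = 0$ for all $k$, which forces $(\sigma/\tau)k\pi \in \pi\ZZ$ for all $k$, i.e. $\sigma/\tau \in \QQ$; taking $k=1$ and using that the quotient $S\sin\((\sigma+\tau)x\)/\sin(\tau x)$ must extend to an entire function of exponential type at most $\sigma$, one checks that the order of the zero of the numerator at each $k\pi/\tau$ must be at least that of the denominator, which pins down $\sigma/\tau \in \NN$. (Here I would note that $\QQ$ is not a defined macro in the paper, so I will write $\sigma/\tau$ rational spelled out or use the argument that avoids it.) Conversely, if $\sigma/\tau = m \in \NN$ and $f(x) = S\sin((\sigma+\tau)x)/\sin(\tau x) = S\sin((m+1)\tau x)/\sin(\tau x)$, then $f$ is, up to the constant $S$, a Chebyshev-like polynomial in $\cos(\tau x)$, hence lies in $\oBB_\sigma$; a direct computation shows $|f(x)| \le (m+1)|S|$ with equality exactly where $\cos(\tau x) = \pm 1$, while $\lno \sin(\tau t) f(t)\rno_{\oCC(\RR)} = |S|$, so \eqref{E1.6} holds with equality at, e.g., $x_0 = 0$ (where $\cos(\tau x_0) = 1 \ne 0$).

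The main obstacle I anticipate is the rigidity step: showing that the divisibility condition "$\sin(\tau x)$ divides $S\sin((\sigma+\tau)x)+C\cos((\sigma+\tau)x)$ in the ring of entire functions" forces $C = 0$ and $\sigma/\tau \in \NN$ — cleanly, for \emph{all} zeros $k\pi/\tau$ simultaneously and with the correct multiplicities. The clean way is: write $g(x) \DEF S\sin((\sigma+\tau)x) + C\cos((\sigma+\tau)x)$; requiring $g(k\pi/\tau) = 0$ for $k = 0$ gives $C = 0$, and then $g(k\pi/\tau) = S\sin((\sigma+\tau)k\pi/\tau) = S\sin(k\pi + \sigma k\pi/\tau) = (-1)^k S \sin(\sigma k\pi/\tau)$, which must vanish for all $k\in\ZZ$; since $S\ne 0$ this gives $\sin(\sigma k\pi/\tau) = 0$ for all $k$, hence $\sigma/\tau \in \ZZ$, and since $\sigma, \tau > 0$ we get $\sigma/\tau \in \NN$. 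The multiplicity bookkeeping then only needs to confirm that simple zeros of the numerator at simple zeros of the denominator suffice, which follows because $\sin((m+1)\tau x)$ has simple zeros at $k\pi/\tau$ (as $(m+1)$ is a positive integer), so the quotient is entire of exponential type $\le (m+1)\tau - \tau = \sigma$ by a standard Phragmén–Lindelöf / order-type argument. I would present this last point briefly rather than in full detail.
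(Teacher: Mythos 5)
Your proposal is correct and follows essentially the same route as the paper: apply Theorem~\ref{T1.1} with $Q(t)=\sin(\tau t)$ (computing $A_{\sigma+\tau}(Q)=\tau^2$), use Theorem~\ref{T1.1}(b) together with Remark~\ref{R1.3} to rule out the first alternative since $Q'(x_0)=\tau\cos(\tau x_0)\ne 0$, force $C=0$ and then $\sigma/\tau\in\NN$ from the vanishing of the numerator at the zeros $k\pi/\tau$ of $\sin(\tau x)$ (the paper only needs $k=1$, as does your ``clean'' version, which supersedes the momentary detour through rationality), and verify the converse via the Chebyshev-type identity. Your write-up is in fact slightly more explicit than the paper's (explicit computation of $A_{\sigma+\tau}(Q)$ and explicit verification that $f(x)=S\sin((\sigma+\tau)x)/\sin(\tau x)$ attains equality at $x_0=0$), but it is the same argument.
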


\begin{corollary} \label{C1.3}

Let $\sigma > 0$. If $f\in \oBB_\sigma$, then
\begin{equation} \label{E1.7}
	|f(x)| \le \sigma \lno t f(t) \rno_{\oCC(\RR)},
	\qquad x\in\RR \, .
\end{equation}
Moreover, equality holds  in \eqref{E1.7} for a point $x=x_0 \in \RR$ and for
a function $f\in \oBB_\sigma$ that is real-valued on $\RR$  with $f
\not\equiv 0$ if and only if $f(x) \equiv S \sin \(\sigma \, x\)/x$ for some real
$S \ne 0$.

\end{corollary}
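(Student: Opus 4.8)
The plan is to derive Corollary~\ref{C1.3} from Theorem~\ref{T1.1} by specializing to the weight $Q(x) \DEF x$, which is real-valued on $\RR$, entire of exponential type $\tau = 0$, and has a single (hence trivially separated) real zero at the origin. First I would compute $A_s(Q)$ from \eqref{E1.31}: since $Q'(t) \equiv 1$ and $Q(t) = t$, we get $\lbr Q'(t) \rbr^2 + s^2 \lbr Q(t) \rbr^2 = 1 + s^2 t^2$, whose infimum over $t \in \RR$ is $1$, attained at $t = 0$. Hence $A_\sigma(Q) = 1$, and \eqref{E1.5} with $\sigma + \tau = \sigma$ becomes exactly $|f(x)| \le \sigma \lno t f(t) \rno_{\oCC(\RR)}$, which is \eqref{E1.7}.

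For the equality characterization, the ``if'' direction is a short direct verification: if $f(x) \equiv S \sin(\sigma x)/x$ with $S$ real and nonzero, then $f \in \oBB_\sigma$ (the singularity at $0$ is removable, $f(0) = S\sigma$), $f$ is real-valued on $\RR$, and $(Qf)(x) = S\sin(\sigma x)$ so that $\lno Qf \rno_{\oCC(\RR)} = |S|$; then at $x_0 = 0$ we have $|f(0)| = |S|\sigma = \sigma \lno Qf \rno_{\oCC(\RR)}$, so equality holds. For the ``only if'' direction I would invoke Theorem~\ref{T1.1}(b): suppose equality holds in \eqref{E1.7} at some $x_0 \in \RR$ for a real-valued $f \in \oBB_\sigma$, $f \not\equiv 0$. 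One of the two alternatives of Theorem~\ref{T1.1}(b) must hold. If the first alternative $\lv (Qf)(x_0) \rv = \lno Qf \rno_{\oCC(\RR)}$ were the case, then combined with equality in \eqref{E1.7} we would get $|f(x_0)| = \sigma |(Qf)(x_0)| = \sigma |x_0 f(x_0)|$, forcing $|x_0| = 1/\sigma$ (since $f(x_0) \ne 0$, as $f(x_0) = 0$ would give $\lno Qf \rno = 0$ and $f \equiv 0$); but then by Remark~\ref{R1.3}, since $Q'(x_0) = 1 \ne 0$, we would have $\lv (Qf)(x_0) \rv < \lno Qf \rno$, a contradiction. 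Hence the second alternative holds: $Q(x) f(x) = x f(x) \equiv S \sin(\sigma x) + C \cos(\sigma x)$ for real constants $S, C$ with $|S| + |C| > 0$. Since the left side vanishes at $x = 0$, we must have $C = 0$, so $x f(x) \equiv S \sin(\sigma x)$ with $S \ne 0$, i.e.\ $f(x) \equiv S \sin(\sigma x)/x$.

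The main obstacle — really the only subtle point — is making sure the first alternative of Theorem~\ref{T1.1}(b) is genuinely excluded; the argument above uses Remark~\ref{R1.3} together with the fact that the extremal point $x_0$ cannot be where $Q$ vanishes (at $x_0 = 0$ one has $(Qf)(0) = 0$, so $\lv(Qf)(x_0)\rv = \lno Qf\rno$ would again force $f \equiv 0$). I should also double-check that the stated form $f(x) \equiv S\sin(\sigma x)/x$ automatically has $S$ real: this follows because $f$ is assumed real-valued on $\RR$ and $\sin(\sigma x)/x$ is real-valued and not identically zero. Finally, one small point of bookkeeping: the extremal $f$ does not depend on which $x_0$ was chosen, and indeed equality in \eqref{E1.7} for this $f$ holds precisely at $x_0 = 0$ (and, up to the observation in Remark~\ref{R1.3}, nowhere with $Q' \ne 0$ unless $|(Qf)(x_0)|$ attains the sup, which for $S\sin(\sigma x)$ happens only where $\cos(\sigma x_0) = 0$, where $f$ is nonzero but $|f(x_0)| = |S|/|x_0| < \sigma|S|$), so the characterization is consistent.
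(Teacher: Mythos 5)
Your proposal is correct and follows essentially the same route as the paper: specialize Theorem~\ref{T1.1} to $Q(x)=x$ with $\tau=0$ and $A_{\sigma}(Q)=1$, exclude the first alternative of Theorem~\ref{T1.1}(b) via Remark~\ref{R1.3} (since $Q'\equiv 1\ne 0$), and then kill $C$ by evaluating $xf(x)$ at $x=0$. The intermediate deduction that $|x_0|=1/\sigma$ is harmless but unnecessary, since Remark~\ref{R1.3} already contradicts the first alternative directly.
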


First, we discuss some lemmas in Section~\ref{S2}, and then we give the proofs
of these results in Section~\ref{S3}.

Throughout Sections~\ref{S2} \& \ref{S3}, we will assume that $\tau \ge 0$,
$\sigma > 0$, and that the function $Q\in \oBB_\tau$ is real-valued on $\RR$.
When necessary, will refer to the conditions
\begin{equation} \label{E1.3}
	A_s(Q) > 0
	\quad \mbox{where} \quad
	A_s(Q) \DEF \inf_{t\in\RR} \(\lbr Q'(t) \rbr^2 + s^2 \lbr Q(t) \rbr^2\) ,
	\qquad s \ge \theta \, ,
\end{equation}
where $\theta \ge \tau$ is fixed, and
\begin{equation} \label{E1.4}
	d > 0
	\quad \mbox{where} \quad
	d \DEF \lb
	\begin{array}{ll}
		\inf_{k\ne \ell}|t_k-t_\ell|, 
			&\operatorname{card} Z_Q \ge 2 \, , \\
		1,
			&\mathrm{otherwise} \, ,
	\end{array}
	\right.
\end{equation}
where $Z_Q \DEF \{t_n\}$ denote the real and distinct zeros of $Q$.

In what follows, the Lebesgue measure of a (Lebesgue) measurable set
$E\subset\RR$ will be denoted by $\omm(E)$.

\section{Some Lemmas}\label{S2}

\setcounter{equation}{0}

We will discuss properties of $Q$ and of other entire functions of
exponential type that are needed for the proof of Theorem~\ref{T1.1}. We
start with two simple properties.

\begin{lemma} \label{L2.1}

$\phantom{.}$

%If $Q$ satisfies condition \eqref{E1.3}, then the following statements hold:

{\rm(a)} $A_s(Q)$ defined by \eqref{E1.31} is a right-continuous nondecreasing function of $s\in [0,\infty)$.

{\rm(b)} If $Q$ satisfies condition \eqref{E1.3}, then $\limsup_{x\to\infty} |Q(x)| > 0$
and $\limsup_{x\to-\infty} |Q(x)| > 0$.

\end{lemma}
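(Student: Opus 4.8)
The plan is to prove the two parts separately, as they are genuinely independent: part (a) is a soft fact about infima of families of functions, while part (b) extracts a growth statement from the nondegeneracy condition \eqref{E1.3}.

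For part (a), I would fix the notation $g_s(t) \DEF \lbr Q'(t) \rbr^2 + s^2 \lbr Q(t) \rbr^2$, so that $A_s(Q) = \inf_{t\in\RR} g_s(t)$. Monotonicity is immediate: for $0 \le s_1 \le s_2$ and each $t\in\RR$ we have $g_{s_1}(t) \le g_{s_2}(t)$ since $s_1^2 \le s_2^2$ and $\lbr Q(t)\rbr^2 \ge 0$; taking the infimum over $t$ on both sides gives $A_{s_1}(Q) \le A_{s_2}(Q)$. For right-continuity at a point $s_0 \ge 0$: by monotonicity $\lim_{s\to s_0^+} A_s(Q)$ exists and is $\ge A_{s_0}(Q)$; for the reverse inequality, fix $\varepsilon > 0$ and choose $t_\varepsilon$ with $g_{s_0}(t_\varepsilon) < A_{s_0}(Q) + \varepsilon$, then for $s$ close enough to $s_0$ from the right we have $g_s(t_\varepsilon) = g_{s_0}(t_\varepsilon) + (s^2 - s_0^2)\lbr Q(t_\varepsilon)\rbr^2 < A_{s_0}(Q) + 2\varepsilon$, whence $A_s(Q) < A_{s_0}(Q) + 2\varepsilon$. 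Letting $s\to s_0^+$ and then $\varepsilon\to 0$ finishes it. (Note the argument breaks for left-continuity precisely because the $\sup$ over $s < s_0$ of the $t$-dependent bound cannot be controlled uniformly — which is why only right-continuity is claimed.)

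For part (b), suppose toward a contradiction that, say, $\lim_{x\to\infty} Q(x) = 0$ — more precisely that $\limsup_{x\to\infty}|Q(x)| = 0$, i.e. $Q(x)\to 0$ as $x\to\infty$. Since $Q\in\oBB_\tau$ is bounded on a real neighborhood of $+\infty$ (being $o(1)$ there) and real-valued on $\RR$, I would invoke a Bernstein-type derivative bound together with a compactness/normal-families argument on the translates $Q(\cdot + a)$ as $a\to\infty$: these translates are entire of exponential type $\le \tau$, uniformly bounded on $\RR$ for large $a$, hence form a normal family, and any locally uniform limit is identically $0$ on $\RR$ and therefore (by analyticity of the type-$\tau$ limit) identically $0$; this forces $Q'(x)\to 0$ as $x\to\infty$ as well. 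But then $g_s(x) = \lbr Q'(x)\rbr^2 + s^2\lbr Q(x)\rbr^2 \to 0$ as $x\to\infty$, so $A_s(Q) = \inf_{t\in\RR} g_s(t) = 0$ for every $s$, contradicting \eqref{E1.3}. The case $x\to-\infty$ is identical.

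The main obstacle is the last step: passing from "$Q\to 0$ at infinity" to "$Q'\to 0$ at infinity." The cleanest route is the normal-families argument above (the derivative converges locally uniformly along with the function), but one must be slightly careful that $Q$ is a priori bounded near $+\infty$ only because we assumed it tends to $0$ there; alternatively one can quote a quantitative Bernstein inequality on an interval $[x-1,x+1]$ to bound $|Q'(x)|$ in terms of $\sup_{[x-1,x+1]}|Q|$, which tends to $0$. Either way this is the only point requiring a genuine function-theoretic input; everything else is elementary.
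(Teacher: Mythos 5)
Your part (a) is correct and is essentially the paper's own argument (near-minimizer $t_\varepsilon$ at $s_0$, then control of the extra term $(s^2-s_0^2)\lbr Q(t_\varepsilon)\rbr^2$ as $s\to s_0^+$), so nothing to add there.

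Part (b) is where there is a genuine gap. You aim for the strong conclusion $Q'(x)\to 0$ as $x\to\infty$, and both routes you offer for it are unjustified. The normal-families route claims the translates $Q(\cdot+a)$ are ``uniformly bounded on $\RR$ for large $a$'': but $\sup_{x\in\RR}|Q(x+a)|=\sup_{\RR}|Q|$, and nothing in the hypotheses makes $Q$ bounded on $\RR$ --- the contradiction hypothesis only gives smallness on a right half-line (think of $Q(z)=e^{-z}$, which is real on $\RR$, of exponential type, tends to $0$ at $+\infty$ and is unbounded at $-\infty$). Moreover, normality requires uniform bounds on \emph{complex} compacta, and smallness of $Q$ on real intervals does not control $|Q|$ or $|Q'|$ at nearby complex points for functions of a fixed exponential type. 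The same example kills your fallback: there is no ``quantitative Bernstein inequality'' of the form $|f'(x)|\le C(\tau)\sup_{[x-1,x+1]}|f|$ for $f\in\oBB_\tau$, since $f=\varepsilon T_n$ (Chebyshev polynomial, exponential type $0$ for every $n$) has $\sup_{[-1,1]}|f|=\varepsilon$ while $|f'(0)|=\varepsilon n$ can be made arbitrarily larger. So the one step you flag as ``the only point requiring a genuine function-theoretic input'' is exactly the step that is not established, and the suggested patch is false as stated.

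The paper avoids this entirely because the strong conclusion is not needed: it suffices that $|Q'|$ be small \emph{somewhere} far out, where $Q$ is already small. By the mean value theorem, for $x>0$ there is $\xi\in(x,2x)$ with $x\,|Q'(\xi)|=|Q(2x)-Q(x)|$, hence $\inf_{t\in[x,\infty)}|Q'(t)|\le |Q(2x)-Q(x)|/x\to 0$, so $\liminf_{x\to\infty}|Q'(x)|=0$; along such a sequence both $\lbr Q'\rbr^2$ and $s^2\lbr Q\rbr^2$ tend to $0$ (the latter since $Q\to 0$ on the whole ray), contradicting \eqref{E1.3}. No Bernstein inequality, no normal families, and in fact $Q(x)=o(|x|)$ already suffices for this argument, as the paper notes. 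I recommend replacing your function-theoretic step by this elementary one; whether $Q'\to 0$ actually holds under your hypotheses is beside the point and would need a genuinely different (and missing) argument.
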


\begin{proof}

$\phantom{.}$

(a) Clearly, $A_s(Q)$ is a nondecreasing function of $s \in [0,\infty)$. Let
$c \in [0,\infty)$ be a fixed number. Then, for each $\varepsilon > 0$, there
exists $u_0 = u_0(\varepsilon,c) \in \RR$ such that
\begin{equation*}
	A_{c}(Q) \ge 
	\lbr Q'(u_0) \rbr^2 + c^2 \lbr Q(u_0) \rbr^2 - \varepsilon \, ,
\end{equation*}
and, therefore,
\begin{equation*} %\label{E2.1}
	A_{c}(Q) \le 
	A_s(Q) \le 
	\lbr Q'(u_0) \rbr^2 + s^2 \lbr Q(u_0) \rbr^2 \le 
	A_{c}(Q) + (s^2-c^2) \lbr Q(u_0) \rbr^2 + \varepsilon \, ,
	\qquad s\in (c, \infty) \, ,
\end{equation*}
from which
\begin{equation*}
	A_{c}(Q) \le 
	\lim_{s \to c+} A_s(Q) \le 
	A_{c}(Q) + \varepsilon \, ,
\end{equation*}
and, letting here $\varepsilon \to 0+$, proves right-continuity of $A_s(Q)$ at $c$.

(b) Fix $s \ge \theta$. If $\limsup_{x\to\infty} |Q(x)| = 0$ , i.e.,
$\lim_{x\to\infty} Q(x) = 0$, then, for $x > 0$ and by the mean value
theorem, 
\begin{equation*}
	|x| \inf_{t\in[x,\infty)} |Q'(t)| \le |Q(2x) - Q(x)| \, ,
\end{equation*}
so that, dividing both sides by $|x|$ and letting $x\to\infty$, we
get\footnote{Actually, $Q(x) = o(|x|)$ as $x\to\infty$ is sufficient here.}
\begin{equation*}
	\liminf_{x\to\infty} |Q'(x)| \le 
	\lim_{x\to\infty} \frac {|Q(2x) - Q(x)|}{|x|} = 
	0 \, ,
\end{equation*}
and then 
\begin{equation*}
	\liminf_{x\to\infty} \(\lbr Q'(x) \rbr^2 + s^2 \lbr Q(x) \rbr^2\) \le 
	\liminf_{x\to\infty} \lbr Q'(x) \rbr^2 + s^2 \lim_{x\to\infty} \lbr Q(x) \rbr^2  = 
	0 \, ,
\end{equation*}
that contradicts \eqref{E1.3}. If $\limsup_{x\to-\infty} |Q(x)| = 0$, then
apply the previous part to $Q(-x)$ that also satisfies the conditions.
\end{proof}

Next, we discuss some metric properties of $Q$ and of other entire functions of
exponential type.

\begin{definition} \notag %\label{D2.2}

Let $0 < \delta \le L < \infty$. A measurable set $E\subseteq \RR$ is called
$(L,\delta)$-dense if for every interval $\Delta \subset \RR$ with $m(\Delta)
= L$, we have $m\(E \cap \Delta\) \ge \delta$.

\end{definition}

\begin{lemma} \label{L2.3}

Let $0 < \delta \le L < \infty$ and $\sigma>0$. If the measurable set
$E\subseteq \RR$ is $(L,\delta)$-dense, then there is a constant $C_1 > 0$
such that for every $f\in \oBB_\sigma$ the Remez-type inequality
\begin{equation} \label{E2.3}
	\lno f \rno_{\oCC(\RR)} \le C_1 \lno f \rno_{\oCC(E)}
\end{equation}
holds.

\end{lemma}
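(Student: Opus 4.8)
The plan is to establish the \emph{existence} of $C_1=C_1(\sigma,L,\delta)$ by a compactness (normal families) argument; this is all the lemma asks, so no explicit value is needed. If $\lno f\rno_{\oCC(E)}=\infty$ there is nothing to prove, so we may assume $\lno f\rno_{\oCC(E)}<\infty$, and in fact that $f$ is bounded on $\RR$ — this is the only case used in the sequel, and one can also check directly that an element of $\oBB_\sigma$ bounded on an $(L,\delta)$-dense set is bounded on $\RR$. Suppose the asserted inequality fails for every $C_1$. After scaling we get $f_n\in\oBB_\sigma$ with $\lno f_n\rno_{\oCC(\RR)}=1$ and $\lno f_n\rno_{\oCC(E)}\to0$. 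Choose $x_n\in\RR$ with $|f_n(x_n)|\ge\tfrac12$ and set $g_n(z)\DEF f_n(z+x_n)$; then $g_n\in\oBB_\sigma$, $\lno g_n\rno_{\oCC(\RR)}=1$, $|g_n(0)|\ge\tfrac12$, and $\lno g_n\rno_{\oCC(E-x_n)}=\lno f_n\rno_{\oCC(E)}\to0$, while every translate $E-x_n$ is again $(L,\delta)$-dense.

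Since each $g_n$ has exponential type $\le\sigma$ and is bounded by $1$ on $\RR$, the Phragm\'en--Lindel\"of principle yields $|g_n(x+iy)|\le e^{\sigma|y|}$ throughout $\CC$, so $\{g_n\}$ is uniformly bounded on compact subsets of $\CC$. By Montel's theorem a subsequence converges locally uniformly to an entire function $g$, which then also satisfies $|g(x+iy)|\le e^{\sigma|y|}$, hence $g\in\oBB_\sigma$; and $|g(0)|\ge\tfrac12$, so $g\not\equiv0$.

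Now I would reach a contradiction by a measure count on the fixed interval $I\DEF[-L,L]$. On the one hand $I$ contains two disjoint intervals of length $L$, each meeting $E-x_n$ in measure $\ge\delta$, so $\omm((E-x_n)\cap I)\ge2\delta$ for all $n$. On the other hand, as $g\not\equiv0$ its zero set is countable, so $\omm(\{t\in I:|g(t)|\ge c\})\to\omm(I)=2L$ as $c\to0+$; fix $c_0>0$ with $\omm(\{t\in I:|g(t)|\ge c_0\})>2L-2\delta$. Then for each $n$ the two subsets $\{t\in I:|g(t)|\ge c_0\}$ and $(E-x_n)\cap I$ of $I$ have measures summing to more than $\omm(I)$, hence they intersect: there is $\xi_n\in I$ with $|g(\xi_n)|\ge c_0$ and $\xi_n+x_n\in E$. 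But then $|g(\xi_n)|\le|g(\xi_n)-g_n(\xi_n)|+|g_n(\xi_n)|\le\sup_I|g-g_n|+\lno f_n\rno_{\oCC(E)}\to0$, contradicting $|g(\xi_n)|\ge c_0>0$. The crux of the argument is exactly this step: one must recenter (so that $|g_n(0)|\ge\tfrac12$) to keep the limit $g$ nontrivial and prevent the ``bump'' of $f_n$ from escaping to infinity, and then combine the uniform-in-$n$ lower bound $\omm((E-x_n)\cap I)\ge2\delta$ — available because $(L,\delta)$-density is translation invariant — with the fact that a nonzero entire function cannot be small on a subset of $I$ of nearly full measure. The only other point needing care is the preliminary reduction to $f$ bounded on $\RR$, which is what licenses the Phragm\'en--Lindel\"of estimate.
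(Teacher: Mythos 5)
Your compactness half is fine as far as it goes: once one knows that every $f\in\oBB_\sigma$ with $\lno f\rno_{\oCC(E)}<\infty$ is automatically bounded on $\RR$, the normalization $\lno f_n\rno_{\oCC(\RR)}=1$, the recentering so that $|g_n(0)|\ge\tfrac12$, the Phragm\'en--Lindel\"of bound $|g_n(x+iy)|\le e^{\sigma|y|}$, Montel, and the measure count on $[-L,L]$ are all correct and do produce a constant depending only on $\sigma,L,\delta$. (Note that the paper does not prove the lemma at all; it quotes it as a special case of Katsnelson's theorem, so a self-contained proof would be welcome if it were complete.)

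The genuine gap is the sentence ``one can also check directly that an element of $\oBB_\sigma$ bounded on an $(L,\delta)$-dense set is bounded on $\RR$.'' That claim is not a routine check: it is the qualitative heart of the lemma (essentially Katsnelson's/Levin's theorem itself), and your quantitative argument cannot even start without it, since an $f$ bounded on $E$ but unbounded on $\RR$ would falsify the lemma outright and admits neither the normalization $\lno f\rno_{\oCC(\RR)}=1$ nor the Phragm\'en--Lindel\"of estimate. It is also genuinely delicate: the obvious local two-constants/Remez estimate on a disk $D(x,R)$ only bounds $\log|f(x)|$ by a fixed fraction of $\sup_{D(x,2R)}\log|f|\le C_\varepsilon+(\sigma+\varepsilon)(|x|+2R)$, which still grows linearly in $|x|$, so boundedness on $\RR$ does not follow without a further iteration or harmonic-measure argument; and the analogous statement with the $(L,\delta)$-dense set replaced by a relatively dense \emph{sequence} is false for large type (the function $z\sin(\pi z)$ vanishes on $\ZZ$ but is unbounded on $\RR$), so the positive measure $\delta$ in every interval must be used quantitatively. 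Your fallback ``this is the only case used in the sequel'' is also inaccurate for this paper: Lemma~\ref{L2.3} is invoked, through Lemma~\ref{L2.5}, precisely to conclude that $\lno f\rno_{\oCC(\RR)}<\infty$ in the proof of Theorem~\ref{T1.1}(a), so the a priori unbounded case is exactly the one needed. To close the gap you must either prove the boundedness claim (for instance, a two-constants estimate giving $M(x)\le(1-c)\max_{|y-x|\le kL}M(y)+C$ for $M(x)\DEF\sup_{[x,x+L]}\log|f|$, iterated against the linear growth bound $\log|f(x)|\le(\sigma+\varepsilon)|x|+C_\varepsilon$), or cite Katsnelson/Levin as the paper does, in which case the quantitative inequality is already contained in what is cited.
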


Lemma~\ref{L2.3} is a special case Katsnelson's \cite[Theorem~3,
p.~43]{katsnelson1973} where he proved it for polysubharmonic functions on
$\CC^n$.\footnote{Victor Katsnelson informed us that B.~Ja.~Levin knew of
this result, at least for $n=1$, and lectured about it way before 1971
although he published it in the form of a Remez-type inequality only much
later.}

\begin{lemma} \label{L2.4}

Let $\tau \ge 0$ and $\theta \ge \tau$. If  $Q \in \oBB_\tau$ satisfies
conditions \eqref{E1.3} and \eqref{E1.4}, then there exists $\alpha^\ast > 0$
such that the set $E^\ast \DEF \lb x\in\RR: |Q(x)| \ge \alpha^\ast \rb$ is
$(d/2,d/4)$-dense, where $d$ is defined in \eqref{E1.4}.

\end{lemma}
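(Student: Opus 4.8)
The plan is to reduce the claim to a quantitative statement about how small $|Q|$ can be on a window of length $d/2$, using that $Q$ has exponential type $\tau \le \theta$ and that condition \eqref{E1.3} gives a uniform lower bound $[Q'(t)]^2 + \theta^2[Q(t)]^2 \ge A_\theta(Q) > 0$ for every $t\in\RR$. Fix such a $\theta$ and write $A \DEF A_\theta(Q) > 0$. First I would observe that at any point $t$ where $|Q(t)|$ is very small, the derivative cannot be small: from \eqref{E1.3}, $|Q'(t)|^2 \ge A - \theta^2 |Q(t)|^2$, so if $|Q(t)| \le \sqrt{A}/(2\theta)$ then $|Q'(t)| \ge \sqrt{3A}/2$. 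The idea is that near such a point $Q$ must move away from $0$ linearly, so $|Q|$ can only be small on a short subinterval; but to make this rigorous on a window of fixed length $d/2$ I need a uniform bound on $|Q''|$ (or on $|Q'|$) on the relevant set, which is where exponential type enters.

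Next I would invoke a Bernstein-type bound. Set $M \DEF \sup\{|Q(x)| : x\in\RR, |Q(x)| \le 1\}$-type control — more precisely, on the set where $|Q|$ is bounded, say $|Q(x)|\le B$ for a constant $B$ to be chosen, Bernstein's inequality for functions of exponential type $\tau$ gives $|Q'(x)| \le \tau B$ and $|Q''(x)|\le \tau^2 B$ there. (One must first restrict attention to a region where $|Q|$ is bounded; since we only care about points where $|Q|$ is \emph{small}, this is automatic, although a small technical point is that Bernstein's inequality is a global statement, so I would instead argue on the whole line with $B \DEF \lno Q\rno$ if $Q$ is bounded, and otherwise localize.) Granting a bound $|Q''| \le K$ on a neighbourhood of any near-zero of $Q$, a Taylor/mean-value argument shows: if $|Q(t_0)|$ is small and $|Q'(t_0)| \ge \sqrt{3A}/2$, then for $|x - t_0| \le h$ with $h$ small, $|Q(x)| \ge |Q'(t_0)||x-t_0| - |Q(t_0)| - \tfrac12 K h^2$, which is $\ge \alpha^\ast$ on an interval of definite length, provided $\alpha^\ast$ is chosen small enough.

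Then I would assemble the density statement. Given any interval $\Delta$ of length $d/2$, I want $\omm(E^\ast \cap \Delta) \ge d/4$, i.e. the "bad" set $\{x\in\Delta : |Q(x)| < \alpha^\ast\}$ has measure at most $d/4$. By the hypothesis \eqref{E1.4}, the distinct real zeros of $Q$ are $d$-separated, so $\Delta$ contains at most one real zero of $Q$ (since $\omm(\Delta) = d/2 < d$); away from that zero — and on all of $\Delta$ if there is no zero — the local analysis above, applied around each near-zero, shows that $\{|Q| < \alpha^\ast\}\cap\Delta$ is contained in one interval of length $< d/4$ centered at that zero, once $\alpha^\ast$ is small enough (here I use that between consecutive zeros $|Q|$ attains a definite minimum dictated by $A$ and the Bernstein bound, so it cannot dip below $\alpha^\ast$ except close to an actual zero). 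Choosing $\alpha^\ast$ accordingly yields $(d/2, d/4)$-density of $E^\ast$.

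I expect the main obstacle to be the globalization/localization issue around Bernstein's inequality: $Q$ need not be bounded on $\RR$, so one cannot directly bound $|Q'|$ or $|Q''|$ uniformly. The fix is to note that density of $E^\ast$ is a \emph{local} property — it suffices to control $Q$ on windows of length $d/2$ — and that on the bad set $|Q|$ is by definition at most $\alpha^\ast \le 1$, so one only needs derivative bounds on a neighbourhood of $\{|Q|\le 1\}$; combining $|Q|\le 1$ there with exponential type $\tau$ via a local Bernstein (or Bernstein-type) inequality on a slightly enlarged window gives the needed bound on $|Q'|$ and $|Q''|$, and the separation condition \eqref{E1.4} guarantees that these enlarged windows do not interact badly. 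Once that is in place, the rest is the elementary Taylor estimate above with $\alpha^\ast$ chosen explicitly in terms of $A_\theta(Q)$, $\tau$, and $d$.
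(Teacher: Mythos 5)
Your opening observation---that \eqref{E1.3} forces $|Q'(t)|$ to be bounded away from zero wherever $|Q(t)|$ is small---is exactly the right starting point, but the rest of the argument has a genuine gap: everything after that hinges on an \emph{upper} bound for $|Q'|$ and $|Q''|$ near the near-zeros of $Q$, obtained from a Bernstein-type inequality, and no such bound follows from the hypotheses. The global Bernstein inequality requires $Q$ to be bounded on $\RR$, which fails in the very cases the lemma is applied to (e.g.\ $Q(x)=x$ in Corollary~\ref{C1.3}). The ``local Bernstein'' repair you propose---$|Q|\le 1$ on a window of length comparable to $d$, plus exponential type $\tau$, should bound $|Q'|$ and $|Q''|$ on a slightly smaller window---is false: every polynomial belongs to $\oBB_\tau$ for every $\tau\ge 0$, and the Chebyshev polynomials $T_n$ are bounded by $1$ on $[-1,1]$ while $|T_n'|$ is already of order $n$ at the centre of that interval, so no bound depending only on the type and the window length can exist. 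Since both your Taylor estimate and your claim that ``$|Q|$ cannot dip below $\alpha^\ast$ except close to an actual zero'' lean on this unavailable bound, the proof does not close.

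The step you are missing is that no upper derivative bound is needed at all. The paper's proof works directly with the sublevel set $E_\alpha=\{x\in\RR:|Q(x)|<\alpha\}$: writing $C_2^2>0$ for the lower bound in \eqref{E1.3} and taking $\alpha\le C_2/s$, the derivative $Q'$ cannot vanish on $E_\alpha$, so $Q$ is strictly monotone on each connected component $I$ of $E_\alpha$; a bounded component, on which $|Q|$ takes the value $\alpha$ at both endpoints, therefore contains exactly one zero $t_I$ of $Q$, and the mean value theorem together with $|Q'|\ge\sqrt{C_2^2-\alpha^2 s^2}$ on $I$ gives $|x-t_I|\le \alpha/\sqrt{C_2^2-\alpha^2 s^2}\le d/8$ for every $x\in I$ once $\alpha$ is chosen small enough; unbounded components are ruled out for small $\alpha$ by Lemma~\ref{L2.1}(b), since monotonicity forces a limit at $\pm\infty$ which cannot be $0$. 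Hence every point with $|Q|<\alpha^\ast$ lies within $d/8$ of a real zero, and the separation hypothesis \eqref{E1.4} then yields, much as in your final paragraph, that any interval of length $d/2$ meets the bad set in measure at most $d/4$. So your combinatorial conclusion is sound, but the analytic core of your argument must be replaced by this monotonicity-plus-mean-value reasoning (or some other device that avoids upper bounds on $Q'$ and $Q''$).
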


\begin{proof}

Let $s \ge \theta$, see \eqref{E1.3}. It follows immediately from
\eqref{E1.3} that there exists a constant $C_2 > 0$ such that 
\begin{equation} \label{E2.4} 
	\lbr Q'(x) \rbr^2 + s^2 \lbr Q(x) \rbr^2 \ge  C_2^2,
	\qquad x\in\RR \, . 
\end{equation} 
Next, given $\alpha > 0$, the open set $E_\alpha \DEF \{x\in\RR : |Q(x)| <
\alpha\}$ can be represented as $E_\alpha=\bigcup_{I\in\mcAA_\alpha} I$,
where $\mcAA_\alpha$ is a family of pairwise disjoint open intervals $I$.
Then \eqref{E2.4} shows that for $\alpha\in(0,C_2/s]$ the set $E_\alpha$ does
not contain  zeros of $Q'$. Therefore, $Q$ is strictly monotone on each
interval $I\in\mcAA_\alpha$.

If  $\alpha\in(0,C_2/s]$ and $I\in\mcAA_\alpha$ is a bounded interval then $|Q|$
takes the same value at the endpoints of $I$ so that $I$ contains precisely
one zero of $Q$.

If  $\alpha\in(0,C_2/s]$ and $I\in\mcAA_\alpha$ is an unbounded interval, say
$(a,\infty)$, then, $Q$ being monotone on $I$, the finite or infinite $\Gamma
\DEF \lim_{x\to\infty} Q(x)$ exists and, by Lemma~\ref{L2.1}(b), $\Gamma \ne 
0$. Hence, if $\alpha > 0$ is sufficiently small, say, $0 < \alpha < \alpha_1
\le C_2/s$ then $I\not\subset E_\alpha$ so that $I\not\in\mcAA_\alpha$. In
other words, if $0 < \alpha < \alpha_1 \le C_2/s$ then $\mcAA_\alpha$ has no
unbounded components.

Summarizing the above, there exists $\alpha_1\in(0,C_2/s]$ such that, for all
$\alpha\in(0,\alpha_1]$, each interval $I\in\mcAA_\alpha$ is bounded and it
contains precisely one zero $t_I$ of $Q$. In particular, if $Q$ has no real
zeros, then $E_\alpha=\emptyset$ for $\alpha\in(0,\alpha_1]$.

It remains to estimate the length of each interval $I \in \mcAA_\alpha$. Let
$\alpha_2\in(0,C_2/s]$ be the unique positive solution of the equation
$\alpha/\sqrt{C_2^2-\alpha^2 s^2} = d/8$. 
Let $\alpha^\ast \DEF \min \lb \alpha_1,\alpha_2 \rb$.
Taking account of \eqref{E2.4} and using the mean value theorem, we see that
for each $x \in I$ there exists $\xi \in I$ such that
\begin{equation*}
	|x-t_I| = 
	\frac {|Q(x)|} {|Q'(\xi)|} \le 
	\frac {\alpha} {\sqrt{C_2^2-\alpha^2 s^2}} \le d/8 ,
\end{equation*}
whenever $0 < \alpha \le \alpha^\ast$. Hence $m(I) \le d/4$ for
$\alpha\in(0,\alpha^\ast]$. Therefore, due to condition \eqref{E1.4}, for every
closed interval $\Delta$ of length $d/2$, we have $m\(\Delta\cap
E_{\alpha^\ast}\) \le d/4$. This shows that $E^\ast \DEF \RR\setminus
E_{\alpha^\ast}$ is $(d/2,d/4)$-dense.
\end{proof}

Now we are in the position to prove the following Schur-type inequality.

\begin{lemma} \label{L2.5}

Let $\tau \ge 0$ and $\theta \ge \tau$. Let  $Q \in \oBB_\tau$ satisfy
conditions \eqref{E1.3} and \eqref{E1.4}. Let $\sigma > 0$. Then there is
a constant $C_3 > 0$ such that for every $f\in \oBB_\sigma$ we have 
\begin{equation}  \label{E2.5}
	\lno f \rno_{\oCC(\RR)} \le  C_3 \lno Q f \rno_{\oCC(\RR)} .
\end{equation}

\end{lemma}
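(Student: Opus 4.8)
The plan is to combine Lemma~\ref{L2.4} with the Remez-type inequality of Lemma~\ref{L2.3}. First I would fix $\theta \ge \tau$ and pick $s = \theta$, so that hypotheses \eqref{E1.3} and \eqref{E1.4} supply the constant $C_2 > 0$ from \eqref{E2.4} and the distance $d > 0$ from \eqref{E1.4}. By Lemma~\ref{L2.4} there is an $\alpha^\ast > 0$ such that the set $E^\ast = \{x \in \RR : |Q(x)| \ge \alpha^\ast\}$ is $(d/2, d/4)$-dense. Now let $f \in \oBB_\sigma$ be arbitrary. On $E^\ast$ we have the pointwise bound $|f(x)| \le (\alpha^\ast)^{-1} |Q(x) f(x)| \le (\alpha^\ast)^{-1} \lno Q f \rno_{\oCC(\RR)}$, hence $\lno f \rno_{\oCC(E^\ast)} \le (\alpha^\ast)^{-1} \lno Q f \rno_{\oCC(\RR)}$.

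Next I would invoke Lemma~\ref{L2.3} with $L = d/2$, $\delta = d/4$, and the exponential type $\sigma$: since $E^\ast$ is $(d/2, d/4)$-dense, there is a constant $C_1 > 0$, depending only on $d$ and $\sigma$, such that $\lno f \rno_{\oCC(\RR)} \le C_1 \lno f \rno_{\oCC(E^\ast)}$ for every $f \in \oBB_\sigma$. Chaining the two estimates gives $\lno f \rno_{\oCC(\RR)} \le C_1 (\alpha^\ast)^{-1} \lno Q f \rno_{\oCC(\RR)}$, so \eqref{E2.5} holds with $C_3 \DEF C_1 (\alpha^\ast)^{-1}$, which is independent of $f$. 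One small point to address at the outset: if $\lno Q f \rno_{\oCC(\RR)} = \infty$ the inequality is trivial, and if it is finite then $f$ is bounded on the $(d/2,d/4)$-dense set $E^\ast$, so that Lemma~\ref{L2.3} is applicable and in particular $f \in \oCC(\RR)$ has finite sup-norm; alternatively one simply notes that if the right-hand side of \eqref{E2.5} is finite then so is the left by the argument just given.

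I do not expect a genuine obstacle here: the two ingredients are exactly tailored to each other, and the only thing to be careful about is that the constant $C_1$ from Lemma~\ref{L2.3} and the constant $\alpha^\ast$ from Lemma~\ref{L2.4} depend only on $\sigma$, $\tau$, $\theta$, and the geometric data $d$ of $Q$, but not on the particular $f \in \oBB_\sigma$ — which is precisely what Lemmas~\ref{L2.3} and \ref{L2.4} guarantee. The mildly delicate step, already handled inside Lemma~\ref{L2.4}, was extracting a quantitative density statement from the qualitative conditions \eqref{E1.3} and \eqref{E1.4}; given that lemma, the present proof is just a two-line composition.
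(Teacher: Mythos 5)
Your proposal is correct and follows essentially the same route as the paper: invoke Lemma~\ref{L2.4} to obtain $\alpha^\ast$ and the $(d/2,d/4)$-dense set $E^\ast$, bound $\lno f \rno_{\oCC(E^\ast)} \le (\alpha^\ast)^{-1} \lno Qf \rno_{\oCC(\RR)}$, and conclude with the Remez-type inequality of Lemma~\ref{L2.3}, giving $C_3 = C_1/\alpha^\ast$. Nothing further is needed.
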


\begin{proof}

Let the number $\alpha^\ast > 0$ and the $(d/2,d/4)$-dense set $E^\ast
\subset \RR$ be chosen with the help of Lemma~\ref{L2.4}. Then
\begin{equation} %\label{E2.6}
	\lno f \rno_{\oCC(E^\ast)} \le \frac {1}{\alpha^\ast} \lno Qf \rno_{\oCC(\RR)}
\end{equation}
so that \eqref{E2.5} follows from Lemma~\ref{L2.3}.
\end{proof}

\section{Proofs of the Main Results}\label{S3}

\setcounter{equation}{0}

\begin{proof}[Proof of Theorem~\ref{T1.1}(a)]

Let $\tau$, $\sigma$, and $Q$ satisfy the conditions of Theorem~\ref{T1.1}.
Let $f\in \oBB_\sigma$ and assume $f \not\equiv 0$ and $\lno Q f
\rno_{\oCC(\RR)} < \infty$ since otherwise all claims are self-evident. 

The proof will consist of three short steps. Using Lemma~\ref{L2.5} with
$\theta = \sigma + \tau$, we obtain $\lno f \rno_{\oCC(\RR)} < \infty$, and
this will be used in all three steps.

\textit{Step~1a.} First, we assume that $f$ is real-valued on $\RR$ and that
there exists $x^\ast\in\RR$ such that $|f(x^\ast)|= \lno f \rno_{\oCC(\RR)}$.
Since $Qf\in \oBB_{\sigma + \tau}$, we can apply the Duffin-Schaeffer
inequality to $Q f$ to get
\begin{equation} \label{E3.1}
	\lbr (Qf)'(x) \rbr^2 + \(\sigma + \tau\)^2 \lbr (Q f)(x) \rbr^2
	\le 
	(\sigma + \tau)^2 \lno Q f \rno_{\oCC(\RR)}^2 ,
	\qquad x\in\RR \, ,
\end{equation}
see \cite[Theorem~II, p.~555]{DS1937}. Since $f'(x^\ast) = 0$,
we can set $x = x^\ast$ in \eqref{E3.1} to arrive at 
\begin{equation*} %\label{E3.2}
	\lno f \rno_{\oCC(\RR)}^2=f^2(x^\ast) \le 
	\frac 
		{(\sigma + \tau)^2 \lno Q f \rno_{\oCC(\RR)}^2}
		{\lbr Q'(x^\ast) \rbr^2 + (\sigma + \tau)^2 \lbr Q(x^\ast) \rbr^2} \, ,
\end{equation*}
that implies \eqref{E1.5} immediately.

\textit{Step~2a.} Second, we assume that $f$ is real-valued on $\RR$ but
$|f|$ doesn't necessarily take its maximum value on $\RR$. Given $\varepsilon
> 0$, pick $x_\varepsilon \in \RR$ such that $\lno f \rno_{\oCC(\RR)} \le
\lv f(x_\varepsilon) \rv + \varepsilon$. Next, observe that, for fixed $\delta > 0$,
the function $F_{\delta}$ defined by
\begin{equation*}
	F_{\delta}(x) \DEF
	f(x) \, \frac{\sin \( \delta(x-x_\varepsilon)\)} {\delta(x-x_\varepsilon)} \, ,
	\qquad x\in\RR \, ,
\end{equation*}
belongs to $\oBB_{\sigma+\delta}$, it is real-valued on $\RR$, $\lv
F_{\delta}(x) \rv \le |f(x)|$ for $x\in\RR$, and, since $F_{\delta}(x)$ goes to
$0$ as $x\to\pm\infty$, it takes its maximal value at some point in $\RR$ so
that, as proved in Step~1a, we can use \eqref{E1.5} to obtain
\begin{align*}
	\lno f \rno_{\oCC(\RR)} - \varepsilon \le
	\lv f(x_\varepsilon) \rv &=
	\lv F_{\delta}(x_\varepsilon) \rv \le 
	\lno F_{\delta} \rno_{\oCC(\RR)} \le
	\\
	(\sigma + \tau + \delta) \(A_{\sigma + \tau+\delta}(Q)\)^{-1/2} \lno Q F_{\delta} \rno_{\oCC(\RR)} &\le
	(\sigma + \tau + \delta) \(A_{\sigma + \tau+\delta}(Q)\)^{-1/2} \lno Q f \rno_{\oCC(\RR)} .
\end{align*}
First, letting here $\varepsilon \to 0+$, we get
\begin{equation*} %\label{E3.3}
	\lno f \rno_{\oCC(\RR)} \le 
	(\sigma + \tau + \delta) \(A_{\sigma + \tau+  \delta}(Q)\)^{-1/2} \lno Q f \rno_{\oCC(\RR)} ,
\end{equation*}
and then, letting $\delta \to 0+$ and using right-continuity of $A_s(Q)$ as
proved in Lemma~\ref{L2.1}(a), we obtain \eqref{E1.5} for $f$ as well.

\textit{Step~3a.} Third, we assume that $f$ is not necessarily real-valued on
$\RR$. Then $f$ can be written as $f = f_1 +i f_2$, where both $f_1$ and
$f_2$ given by
\begin{equation*}
	f_1(z) \DEF \frac {1}{2} \(f(z) + \overline{f(\overline{z})}\)
	\quad \& \quad
	f_2(z) \DEF  \frac {1}{2i} \(f(z) - \overline{f(\overline{z})}\)
\end{equation*}
are real-valued functions on $\RR$ that belong to $\oBB_\sigma$.

Given $\varepsilon > 0$, pick $x_\varepsilon \in \RR$ such that
$f(x_\varepsilon) \ne 0$ and $\lno f \rno_{\oCC(\RR)} \le \lv
f(x_\varepsilon) \rv + \varepsilon$. Let $\eta \in \RR$ be defined by
$\exp(i\eta) = f(x_\varepsilon)/|f(x_\varepsilon)|$, and let the function $G$
be defined by $G \DEF \cos \eta \, f_1 + \sin \eta \, f_2$. Then $G$ is
real-valued on $\RR$ and satisfies the relations $|G(x)| \le |f(x)|$ for
$x\in\RR$ and $\lv G(x_\varepsilon) \rv = \lv f(x_\varepsilon) \rv$. Applying
\eqref{E1.5} to $G$, we obtain
\begin{align*}
	\lno f \rno_{\oCC(\RR)} - \varepsilon \le
	\lv f(x_\varepsilon) \rv &=
	\lv G(x_\varepsilon) \rv \le 
	\lno G \rno_{\oCC(\RR)} \le
	\\
	(\sigma + \tau) \(A_{\sigma + \tau}(Q)\)^{-1/2} \lno Q G \rno_{\oCC(\RR)} &\le
	(\sigma + \tau) \(A_{\sigma + \tau}(Q)\)^{-1/2} \lno Q f \rno_{\oCC(\RR)} ,
\end{align*}
and, letting $\varepsilon\to 0+$, we again arrive at \eqref{E1.5} for $f$ as well. 

Thus, Theorem~\ref{T1.1}(a) has fully been proved.
\end{proof}

\begin{proof}[Proof of Theorem~\ref{T1.1}(b)]

Let $\tau$, $\sigma$, and $Q$ satisfy the conditions of Theorem~\ref{T1.1}.
If there exists $x = x_0 \in \RR$ and $f \in \oBB_\sigma$ that is real-valued
on $\RR$ such that equality holds in \eqref{E1.5}, then $\lno Q f
\rno_{\oCC(\RR)} < \infty$ and $|f(x_0)|= \lno f \rno_{\oCC(\RR)} < \infty$
so that $f'(x_0) = 0$. Hence, using again \eqref{E1.5} and the definition of $A_s(Q)$ in \eqref{E1.31},
\begin{align*} %\label{E3.4}
	& \lbr (Qf)'(x_0) \rbr^2 + \(\sigma + \tau\)^2 \lbr (Q f)(x_0) \rbr^2 =
	\( \lbr Q'(x_0) \rbr^2 + \(\sigma + \tau\)^2 \lbr Q(x_0) \rbr^2 \) 
		\lno f \rno_{\oCC(\RR)}^2 =
	\\
	& (\sigma + \tau)^2 
		\frac 
		{\lbr Q'(x_0) \rbr^2 + \(\sigma + \tau\)^2 \lbr Q(x_0) \rbr^2}
		{A_{\sigma + \tau}(Q)}
		\lno Q f \rno_{\oCC(\RR)}^2 \ge
	(\sigma + \tau)^2||Qf||_{\oCC(\RR)}^2 .
\end{align*}
Summarizing,
\begin{equation*}
	\lbr (Q f)'(x_0) \rbr^2 + \(\sigma + \tau\)^2 \lbr (Q f)(x_0) \rbr^2 \ge
	(\sigma + \tau)^2||Qf||_{\oCC(\RR)}^2 ,
\end{equation*}
and, by the Duffin-Schaeffer inequality,
\begin{equation*}
	\lbr (Q f)'(x_0) \rbr^2 + \(\sigma + \tau\)^2 \lbr (Q f)(x_0) \rbr^2 \le
	(\sigma + \tau)^2||Qf||_{\oCC(\RR)}^2 ,
\end{equation*}
see \eqref{E3.1}, so that
\begin{equation*}
	\lbr (Q f)'(x_0) \rbr^2 + \(\sigma + \tau\)^2 \lbr (Q f)(x_0) \rbr^2 =
	(\sigma + \tau)^2||Qf||_{\oCC(\RR)}^2 .
\end{equation*}
If $ \lv (Q f)(x_0) \rv < \lno (Qf) \rno$, then, this equality is possible if and only
if $Qf$ is of the form 
$(Q f)(x) = S \sin \((\sigma + \tau) \, x\) + C \cos \((\sigma + \tau) \, x\)$
with some real constants $S$ and $C$, see \cite[Theorem~II, p.~555]{DS1937},
and, since $f \not\equiv 0$, we have $|S| + |C| > 0$.

Thus, Theorem~\ref{T1.1}(b) has fully been proved as well. 
\end{proof}

\begin{proof}[Proof of Corollaries~\ref{C1.2} and~\ref{C1.3}]

The weights $Q(x) \DEF \sin \(\tau \, x\) \in \oBB_\tau$ and 
$Q(x) \DEF x \in \oBB_0$ both satisfy conditions \eqref{E1.3} and
\eqref{E1.4} of Theorem~\ref{T1.1}, and in both cases the corresponding
$A_s(Q)$ is easy to compute. Therefore, inequalities \eqref{E1.6} and
\eqref{E1.7} follow immediately from \eqref{E1.5}. 

Next, we describe all extremal functions in Corollaries~\ref{C1.2} and
\ref{C1.3}.

Let $Q(x) \equiv \sin \(\tau \, x\)$ with $\tau > 0$. If $\sigma/\tau \in \NN$,
then $f(x) \DEF S \sin \((\sigma + \tau) x\) / \sin \(\tau \, x\)$ is an extremal
function for \eqref{E1.6} at $x = 0$. Let us assume that $f\in \oBB_\sigma$
with $f \not\equiv 0$ is an extremal function for \eqref{E1.6} at some $x =
x_0 \in \RR$ such that $\cos \(\tau \, x_0\) \ne 0$. Then, by
Theorem~\ref{T1.1}(b) and Remark~\ref{R1.3},
\begin{equation*}
	\sin \(\tau x\) f(x) \equiv
	S \sin \((\sigma + \tau) \, x\) + C \cos \((\sigma + \tau) \, x\) ,
	\qquad x \in \RR \, ,
\end{equation*}
where $|S| + |C| > 0$ are real. Clearly, $C$ must be $0$ since, otherwise,
$f$ is not even continuous at $0$. Furthermore, 
$\sin \(\(\sigma + \tau\) x\)/\sin \(\tau \, x\)$ belongs to $\oBB_\sigma$ if
and only if the zeros of the denominator are also zeros of the numerator.
Then the numerator must vanish at $\pi/\tau$ which is the first positive zero
of the denominator so that $\sigma$ must be an integer multiple of $\tau$.
This proves Corollary~\ref{C1.2}.

Next let $Q(x) \equiv x$. Then
$f(x) \DEF S \sin \(\sigma \, x\) / x$ is an extremal function for
\eqref{E1.7}
at $x = 0$. Moreover, if $f\in \oBB_\sigma$ with $f \not\equiv 0$ is an
extremal function for \eqref{E1.7} at some $x = x_0 \in \RR$, then, since $Q'
\equiv 1 \ne 0$, by Theorem \ref{T1.1}(b) and Remark~\ref{R1.3},
\begin{equation*}
	x f(x) \equiv
	S \sin \(\sigma \, x\) + C \cos \(\sigma \, x\) ,
	\qquad x \in \RR \, ,
\end{equation*}
where $|S| + |C| > 0$ are real. As before, $C$ must be $0$. Thus, Corollary
\ref{C1.3} has been proved as well.
\end{proof}

\end{document}